\DeclareMathOperator{\Ve}{Vec}
\DeclareMathOperator{\ad}{ad}
\DeclareMathOperator{\GL}{GL}
\DeclareMathOperator{\SL}{SL}
\DeclareMathOperator{\im}{im}
\DeclareMathOperator{\Sp}{Sp}
\DeclareMathOperator{\LND}{LNV}
\DeclareMathOperator{\LFD}{LFV}
\DeclareMathOperator{\diag}{diag}
\DeclareMathOperator{\Span}{span}
\DeclareMathOperator{\rk}{rk}
\begin{document}

\title[Bracket width of current Lie algebras]{Bracket width of current Lie algebras}
\author{Boris Kunyavskii, Ievgen Makedonskyi  and  Andriy Regeta}


\address{\noindent Department of Mathematics, Bar-Ilan University, \newline \indent Ramat Gan, Israel}
\email{kunyav@gmail.com }

\address{\noindent
Yanqi Lake Beijing Institute of Mathematical Sciences and Applications (BIMSA),
\newline
\indent Beijing, China
}
\email{makedonskyi.e@gmail.com}

\address{\noindent Institut f\"{u}r Mathematik, Friedrich-Schiller-Universit\"{a}t Jena, \newline
\indent   Germany}
\email{andriyregeta@gmail.com}

\begin{abstract}
The length of an element $z$ of a Lie algebra $L$ is defined as the
smallest number $s$ needed to represent $z$ as a sum
of $s$ brackets. The bracket width of $L$ is defined as
supremum of the lengths of its elements. Given a finite-dimensional simple Lie algebra $\mathfrak g$ over an algebraically closed field
$\Bbbk$ of characteristic zero, we study the bracket width
of current Lie algebras $L=\mathfrak g\otimes A$. We show that for
an arbitrary $A$ the width is at most 2. For $A=\Bbbk[[t]]$ and
$A=\Bbbk[t]$ we compute the width for algebras of types A and C.
\end{abstract}

\keywords{Lie algebra; bracket width; almost commuting variety; slice theorem}

\subjclass{14L30, 14R20, 17B20, 17B65}

\thanks{Research of the first author was supported by the ISF grant 1994/20. The third author is supported by DFG, project number 509752046.
Part of this research was accomplished when the authors were visiting the Max-Planck-Institut for Mathematics (Bonn) and the second author was visiting Bar-Ilan University. Support of these institutions is gratefully acknowledged.}

\maketitle

\newcommand{\adrien}[1]{\textcolor{purple}{[#1]}}

\newtheorem{claim}{Claim}
\newtheorem{question}{Question}
\newtheorem{theorem}{Theorem}
\newtheorem{corollary}[theorem]{Corollary}
\newtheorem{lemma}[theorem]{Lemma}
\newtheorem{proposition}[theorem]{Proposition}
\newtheorem{definition and proposition}{Definition and proposition}
\newtheorem{conjecture}[theorem]{Conjecture}
\newtheorem{st}{Step}
\setcounter{page}{1}

\newtheorem*{hyp}{Hypothesis (J)}
\newtheorem*{mthm}{Main Theorem}
\newtheorem*{thA}{Theorem A}
\newtheorem*{thB}{Theorem B}
\theoremstyle{definition}
\newtheorem{definition}[theorem]{Definition}
\newtheorem{remark}[theorem]{Remark}
\newtheorem{example}{Example}
\newtheorem{quest}{Question}
\newtheorem{problem}{Problem}

\def\bullitem{\medskip\item[$\bullet$]}
\newcommand{\name}[1]{\textsc{#1\/}}
\newcommand{\Rep}{\mbox{Rep}}
\newcommand{\ann}{\operatorname{Ann}}
\newcommand{\spec}{\operatorname{Spec}}
\newcommand{\syz}{\operatorname{Syz}}
\newcommand{\Syz}{\operatorname{Syz}}
\newcommand{\Der}{\operatorname{Der}}
\renewcommand{\d}{{\partial}}

\newcommand{\dx}{\frac{\partial}{\partial x}}
\newcommand{\dy}{\frac{\partial}{\partial y}}
\newcommand{\dz}{\frac{\partial}{\partial z}}

\newcommand{\0}{_{(0)}}
\newcommand{\1}{_{(1)}}
\newcommand{\2}{_{(2)}}
\newcommand{\3}{_{(3)}}
\newcommand{\4}{_{(4)}}

\newcommand{\gr}{\operatorname{gr}}

\def\Dp{\mathrm{D}_p}
\def\Dh{\mathrm{D}_h}
\def\Dz(z-1){\mathrm{D}_{z(z-1)}}
\def\Dq{\mathrm{D}_q}
\def\C{\Bbbk}
\def\N{\mathbb{N}}
\def\Z{\mathbb{Z}}
\newcommand{\lielnd}[1]{\langle \LND(#1) \rangle}
\newcommand{\lielfd}[1]{\langle \LFD(#1) \rangle}
\def\aut{\mathrm{Aut}}
\def\saut{\mathrm{SAut}}
\def\O{{\mathcal O}}
\def\isorightarrow {\xrightarrow{\sim}}

\def \itt #1,#2:{\medskip\item[$\bullet$]
     page\ \ignorespaces#1, line\ \ignorespaces#2:\ \ignorespaces}

\def\into{{\hookrightarrow}}

\section{Introduction}
Given a Lie  algebra $L$ over an infinite field $\Bbbk$, we define its bracket width  as the supremum of lengths $\ell (z)$, where $z$ runs over the derived algebra $[L,L]$ and $\ell (z)$ is defined as the smallest number $n$ of Lie brackets $[x_i,y_i]$ needed to
represent $z$ in the form
$
z=\sum_{i=1}^n[x_i,y_i].
$

There are many examples of Lie algebras of bracket width strictly bigger than one, see, e.g., \cite{Rom}. However, the width of any finite-dimensional complex {\it simple} Lie algebras is equal to one \cite{Br}.
For finite-dimensional simple {\it real} Lie algebras the problem of existence of an algebra of width greater than one is still wide open,
see \cite{Akh}.

The first examples of simple Lie algebras of bracket width greater than one were found only recently in \cite[Theorem A]{DKR21} among complex
{\it infinite-dimensional} algebras. Namely, they appeared among Lie algebras of vector fields $\Ve(C)$ on smooth affine curves $C$ with trivial tangent bundle, which are simple by \cite{Jor} and \cite[Proposition~1]{Sie96}. More recently, it was proved in \cite{MR} that the bracket width of such Lie algebras is less than or equal to three, and if in addition $C$ is a plane curve with the unique  place at infinity, the bracket width of $\Ve(C)$ equals two.

In the present paper, we  study the  bracket width of another class of infinite-dimensional Lie algebras, namely current Lie algebras.

Let $\Bbbk$ be an algebraically closed field of characteristic zero, $\mathfrak{g}$ be a finite-dimensional simple Lie $\Bbbk$-algebra,
$A$ be a commutative associative $\Bbbk$-algebra with the identity. The current  algebra corresponding to $\mathfrak g$ and $A$ is defined as the tensor product $\mathfrak{g}\otimes_{\Bbbk} A$ with the bracket
\[
[x\otimes a, y\otimes b]:=[x,y]\otimes ab.
\]
With respect to this bracket $\mathfrak g\otimes_{\Bbbk} A$ is a Lie algebra.

Our first result provides an upper estimate for the bracket width
of an arbitrary current algebra.

\begin{theorem} \label{upper}
The bracket width of $\mathfrak{g}\otimes_{\Bbbk} A$ is less than or equal to $2$.
\end{theorem}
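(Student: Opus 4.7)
The plan is to reduce the problem to a purely finite-dimensional statement about $\mathfrak{g}$: to exhibit two fixed elements $x_1, x_2 \in \mathfrak{g}$ such that $\mathfrak{g} = [x_1,\mathfrak{g}] + [x_2,\mathfrak{g}]$. Granted this, for any $z = \sum_{i=1}^n z_i \otimes a_i \in \mathfrak{g} \otimes A$ one decomposes each $z_i = [x_1, u_i] + [x_2, v_i]$ with $u_i, v_i \in \mathfrak{g}$; bilinearity of the bracket then yields
\[
z \;=\; \bigl[x_1 \otimes 1,\, \textstyle\sum_i u_i \otimes a_i\bigr] + \bigl[x_2 \otimes 1,\, \sum_i v_i \otimes a_i\bigr],
\]
a representation as a sum of exactly two brackets in $\mathfrak{g} \otimes A$. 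The crucial feature is that $x_1, x_2$ are chosen \emph{uniformly}, independent of $z$, so that the $n$ individual bracket expressions can be compressed into just two.

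Next I would translate the condition $[x_1,\mathfrak{g}]+[x_2,\mathfrak{g}] = \mathfrak{g}$ via the Killing form $\kappa$ of $\mathfrak{g}$, which is nondegenerate because $\mathfrak{g}$ is simple. Since $\ad(x)$ is $\kappa$-skew, $[x,\mathfrak{g}]^{\perp} = \ker \ad(x) = C_{\mathfrak{g}}(x)$, and the desired condition becomes $C_{\mathfrak{g}}(x_1)\cap C_{\mathfrak{g}}(x_2) = \{0\}$. For the explicit construction, fix a Cartan subalgebra $\mathfrak{h}$ with root decomposition $\mathfrak{g} = \mathfrak{h} \oplus \bigoplus_{\alpha \in \Phi} \mathfrak{g}_\alpha$ and take $x_1 \in \mathfrak{h}$ to be a regular element, so that $C_{\mathfrak{g}}(x_1) = \mathfrak{h}$, together with $x_2 = \sum_{\alpha \in \Phi} e_\alpha$ where $e_\alpha \in \mathfrak{g}_\alpha$ is any nonzero choice. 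For $H \in \mathfrak{h}$ the bracket $[H, x_2] = \sum_{\alpha}\alpha(H)\, e_\alpha$ vanishes only when $\alpha(H) = 0$ for every root $\alpha$, forcing $H = 0$ since the roots span $\mathfrak{h}^{\ast}$; therefore $C_{\mathfrak{g}}(x_2) \cap \mathfrak{h} = 0$, and $C_{\mathfrak{g}}(x_1)\cap C_{\mathfrak{g}}(x_2) = 0$ as required.

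I do not anticipate a serious obstacle. All the ingredients (nondegeneracy of the Killing form, existence of regular semisimple elements, root-space decomposition) are standard in characteristic zero over algebraically closed $\Bbbk$. The one conceptual point one must not miss is that the bound $2$ forces a \emph{global} choice of the pair $(x_1,x_2)$: invoking Brown's theorem $z_i = [y_i, w_i]$ one summand at a time only yields the useless bound $n$, and it is precisely the uniformity of $x_1,x_2$ that allows the $n$-fold sum to collapse into two brackets via bilinearity in the second argument.
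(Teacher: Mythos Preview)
Your proof is correct and follows essentially the same strategy as the paper: reduce to finding two fixed elements $w_1,w_2\in\mathfrak g$ with $\mathfrak g=[w_1,\mathfrak g]+[w_2,\mathfrak g]$, then use bilinearity of the bracket to collapse an arbitrary element of $\mathfrak g\otimes A$ into two brackets. The only difference is that the paper quotes this two-element fact from Bergman--Nahlus, while you supply a self-contained construction (a regular $x_1\in\mathfrak h$ together with $x_2=\sum_{\alpha\in\Phi}e_\alpha$) via the Killing-form duality $[x,\mathfrak g]^{\perp}=C_{\mathfrak g}(x)$---the very duality the paper itself records later as its Lemma on images versus common centralizers.
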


The main object of our interest is the Lie algebra $\mathfrak g\otimes_{\Bbbk} A$ where
$A=\Bbbk[[t]]$ is the algebra of formal power series. In this case we expect
a more precise statement.

\begin{conjecture} \label{conj:lower}
Let $\mathfrak g$ be a finite-dimensional simple Lie algebra. Then the bracket width of $\mathfrak g\otimes_{\Bbbk} \Bbbk [[t]]$ is equal to $2$ if $\mathfrak g$ is of type $\mathrm A_n$ or $\mathrm C_n$ $(n\ge 2)$
and to $1$ otherwise.
\end{conjecture}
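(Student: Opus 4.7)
The plan treats the two halves of Conjecture~\ref{conj:lower} separately, using Theorem~\ref{upper} for the upper bound $\le 2$ throughout. The starting reduction is that, if $z = \sum_{i\ge k} z_i t^i$ with $z_k\neq 0$, then $z = t^k w = [t^k x, y]$ whenever $w = [x, y]$, so we may assume $z_0 \neq 0$. Writing $X = \sum x_i t^i$ and $Y = \sum y_i t^i$, the equation $[X, Y] = z$ amounts to the system $z_n = \sum_{i+j=n}[x_i, y_j]$. After choosing $(x_0, y_0)$ with $[x_0, y_0] = z_0$ (possible by Brown's theorem), the $n$-th equation takes the form $[x_0, y_n] + [x_n, y_0] = z_n - \sum_{1\le i\le n-1}[x_i, y_{n-i}]$, whose right-hand side must lie in $[x_0, \mathfrak{g}] + [\mathfrak{g}, y_0] = (\mathfrak{z}(x_0) \cap \mathfrak{z}(y_0))^{\perp}$ (Killing perp). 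Thus width $=1$ will follow as soon as we produce, for each nonzero $z_0$, a \emph{good pair}: a decomposition $z_0 = [x_0, y_0]$ with $\mathfrak{z}(x_0) \cap \mathfrak{z}(y_0) = 0$.

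For $\mathfrak{g}$ of types $B_n, D_n, E_6, E_7, E_8, F_4, G_2$, and $A_1$, I would prove the good-pair lemma directly. The case $A_1 = \mathfrak{sl}_2$ is immediate: for $z_0 \neq 0$ Brown supplies noncommuting $x_0, y_0$, and any two noncommuting $2\times 2$ matrices generate $M_2(\Bbbk)$ as an associative algebra (Burnside), so by Schur's lemma their common centralizer in $\mathfrak{sl}_2$ is zero. For the other types the strategy is a case analysis on the Jordan decomposition of $z_0$: Kostant--Slodowy slices normalize $z_0$ on regular orbits, and one treats each nilpotent orbit separately via an explicit $(x_0, y_0)$ in which the complementary transversal supplied by the orthogonal or exceptional root data kills the intersection of centralizers. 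The latter construction breaks down for $A_n, C_n$ in a way made precise in the next step.

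For types $A_n$ ($n \ge 2$) and $C_n$ the plan is to exhibit an explicit $z$ of length exactly $2$. The candidate is $z = z_0 + z_1 t$ with $z_0$ lying in a suitably degenerate orbit and $z_1$ violating Killing-orthogonality to a ``universally centralizing'' direction, namely an element $c \in \mathfrak{g}$ belonging to $\mathfrak{z}(x_0) \cap \mathfrak{z}(y_0)$ for \emph{every} factorization $z_0 = [x_0, y_0]$. The existence of such a $c$ should be forced by the extra irreducible component of the Gan--Ginzburg almost commuting variety in type $A$ and by its symplectic analogue in type $C$. The principal obstacle of the conjecture sits exactly here: one must control the common centralizer uniformly across the $G$-invariant moduli of factorizations of $z_0$, rather than at one point at a time. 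A local slice-theoretic argument linearizes the problem at a single factorization, but extending it globally requires a genuinely cohomological or geometric invariant of the bracket map $(\mathfrak{g} \otimes \Bbbk[[t]])^{\times 2} \to \mathfrak{g} \otimes \Bbbk[[t]]$ whose nontriviality is specific to types $A$ and $C$, and this is what couples the two halves of the conjecture and is the genuine missing step.
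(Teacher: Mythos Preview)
The statement you are addressing is labeled a \emph{Conjecture} in the paper, and the paper does not prove it in full. What the paper actually establishes (Theorem~\ref{lower}) is: (i) width $1$ for $\mathfrak{sl}_2$; (ii) width $2$ for types $\mathrm A_n$ and $\mathrm C_n$ with $n\ge 2$. The cases $\mathrm B_n$, $\mathrm D_n$, $\mathrm E_{6,7,8}$, $\mathrm F_4$, $\mathrm G_2$ remain open; Section~\ref{sec:concl} gives only heuristic evidence. Your sketch for those types --- ``case analysis on the Jordan decomposition \dots\ treats each nilpotent orbit separately via an explicit $(x_0,y_0)$'' --- is not a proof, and the paper does not claim one either, so there is nothing to compare on that half.

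For the part the paper \emph{does} prove, types $\mathrm A_n$ and $\mathrm C_n$ ($n\ge 2$), your verdict that this is ``the genuine missing step'' is too pessimistic: the paper gives a complete argument, more concrete than your outline. One takes $z_0$ in the minimal nilpotent orbit (a rank-one matrix). Guralnick's lemma (Lemma~\ref{lem:Gur}) forces any $a,b$ with $[a,b]=z_0$ into a common Borel; degenerating by the one-parameter subgroup for $2\rho^\vee$ shows that the $G$-orbit closure of $(a,b,i,j)$ in the almost commuting scheme $Q_{\mathfrak g}$ contains a closed orbit of commuting semisimples. Luna's \'etale slice there is $Q_{\mathfrak l}$ for a Levi $\mathfrak l$ with nontrivial centre, producing a nonzero common centralizer on an open subset of $Q_{\mathfrak g}$. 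Since the locus of pairs with nonzero common centralizer is closed (Lemma~\ref{lem:Ar}), it is all of $Q_{\mathfrak g}$; hence condition~$(*)$ fails and width $>1$. Note that the paper never produces a single $c$ lying in $\mathfrak z(x_0)\cap\mathfrak z(y_0)$ for \emph{every} factorization, as you seek; it only shows this intersection is nonzero for each factorization separately, which is all Proposition~\ref{prop:crit} requires.

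Your reduction to condition~$(*)$ and the recursive system of equations matches Proposition~\ref{prop:crit} exactly. For $\mathfrak{sl}_2$, your Burnside argument is slightly off: two noncommuting upper-triangular $2\times 2$ matrices need not generate $M_2(\Bbbk)$ as an associative algebra. The cleaner observation (which the paper's case check amounts to) is that every nonzero element of $\mathfrak{sl}_2$ is regular, so $\mathfrak z(x_0)=\Bbbk x_0$, and $[x_0,y_0]\neq 0$ already forces $\Bbbk x_0\cap\Bbbk y_0=0$.
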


Our results partially confirm this expectation.

\begin{theorem} \label{lower}
\begin{itemize}
\item[{}]
\item[(i)] The bracket width of $\mathfrak{sl}_2\otimes \Bbbk[[t]]$ is equal to $1$.
\item[(ii)] If $\mathfrak g$ is of type $\mathrm A_n$ or $\mathrm C_n$    $(n\ge 2)$, the bracket width of $\mathfrak g\otimes \Bbbk[[t]]$ is equal to $2$.
\end{itemize}
\end{theorem}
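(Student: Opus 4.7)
The plan splits along the two parts of the theorem.

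\emph{For item (i)}, given a nonzero $z \in \mathfrak{sl}_2 \otimes \Bbbk[[t]]$, I first factor $z = t^k z'$ with $z'(0) \neq 0$, reducing to the case $z(0) \neq 0$. The essential feature of $\mathfrak{sl}_2$ is that it has rank one: for any $X \in \mathfrak{sl}_2 \otimes \Bbbk[[t]]$ whose constant term $X(0)$ is regular semisimple, I claim that $\operatorname{Im}(\ad X) = X^{\perp}$ as $\Bbbk[[t]]$-submodules (with respect to the Killing form). The inclusion $\operatorname{Im}(\ad X) \subseteq X^{\perp}$ is automatic from invariance; for the reverse, both modules are free of rank $2$ over $\Bbbk[[t]]$, and the induced map $(\mathfrak{sl}_2 \otimes \Bbbk[[t]])/\Bbbk[[t]] X \to X^{\perp}$ has determinant that reduces modulo $t$ to the nonzero determinant of $\ad X(0) \colon \mathfrak{sl}_2/\Bbbk X(0) \to X(0)^{\perp}$, a unit precisely because $X(0)$ is regular semisimple; Nakayama's lemma then yields the isomorphism. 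It remains to construct $X$ with $X(0)$ regular semisimple and $\operatorname{tr}(z'X) = 0$, done inductively in the coefficients of $X$. At constant order the condition $\operatorname{tr}(z'(0) X_0) = 0$ cuts out a hyperplane in $\mathfrak{sl}_2$, which is not contained in the nilpotent cone (a quadric) and hence contains regular semisimple elements. At each higher order $n$, the single linear equation $\operatorname{tr}(z'(0) X_n) = -\sum_{i \geq 1} \operatorname{tr}(z'_i X_{n-i})$ is solvable because $\operatorname{tr}(z'(0) \cdot)$ is a nonzero functional on $\mathfrak{sl}_2$. With such $X$ in hand, $z' \in X^{\perp} = \operatorname{Im}(\ad X)$, so $z' = [X, Y]$ for some $Y$, and therefore $z = [t^k X, Y]$.

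\emph{For item (ii)}, by Theorem~\ref{upper} the width is at most $2$, so it suffices to exhibit $z \in \mathfrak{g} \otimes \Bbbk[[t]]$ of length at least two. The plan is to choose $z$ with $z(0) = 0$, so that in any putative decomposition $z = [X, Y]$ the reductions $X(0), Y(0)$ must commute in $\mathfrak{g}$, and hence lie in a commutative subalgebra $\mathfrak{a} \subseteq \mathfrak{g}$ depending on $X, Y$. The image of the derivative of the commutator map at $(X(0), Y(0))$ is $\operatorname{Im}(\ad X(0)) + \operatorname{Im}(\ad Y(0))$, which is contained in $\mathfrak{a}^{\perp}$, a proper subspace of $\mathfrak{g}$. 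This imposes a linear restriction on $z_1$ depending on $\mathfrak{a}$, and further restrictions on the higher $z_n$ cascade through the inductive construction of $X, Y$. Although the restriction at any single order can be circumvented by an appropriate choice of $\mathfrak{a}$, the coupled system of restrictions across orders is made unsatisfiable for a carefully constructed $z$ when $\mathfrak{g}$ has rank at least $2$ of type $A$ or $C$.

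\emph{The main obstacle} lies in item (ii): producing the explicit $z$ and rigorously verifying that no $(X, Y)$ lifts. The order-one obstruction alone can always be defeated, since (for $\mathfrak{sl}_n$) any traceless matrix is conjugate to one with zero diagonal, producing a Cartan orthogonal to $z_1$, and there are analogues in other types. The real content of the argument is therefore the interaction between different orders. I expect the proof to apply Luna's slice theorem at commuting pairs $(X(0), Y(0))$, together with a careful analysis of the almost commuting variety (as suggested by the paper's keywords), and to exploit features of types $A_n$ and $C_n$ not shared by $B_n$, $D_n$, or the exceptional types; this finer structure is what localizes the obstruction to types $A$ and $C$ in agreement with Conjecture~\ref{conj:lower}.
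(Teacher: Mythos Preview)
Your argument for (i) is correct and takes a genuinely different route from the paper. The paper first proves a general criterion (its Proposition~\ref{prop:crit}): $\mathfrak g\otimes\Bbbk[[t]]$ has width $1$ if and only if every nonzero $c\in\mathfrak g$ admits a representation $c=[a,b]$ with $C_{\mathfrak g}(a)\cap C_{\mathfrak g}(b)=0$, equivalently $\im(\ad a)+\im(\ad b)=\mathfrak g$. For $\mathfrak{sl}_2$ it then verifies this criterion by a two-line case split (nilpotent versus semisimple $c$). Your approach instead exploits directly that in $\mathfrak{sl}_2$ one has $\im(\ad X)=X^\perp$ over $\Bbbk[[t]]$ whenever $X(0)$ is regular semisimple, and constructs a single $X$ orthogonal to $z'$; this is a clean alternative, specific to rank one, that sidesteps the two-term inductive system entirely.

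For (ii) there is a genuine gap. The paper's criterion above reduces the infinite-dimensional question to a purely finite-dimensional one inside $\mathfrak g$: it suffices to exhibit a single nonzero $c\in\mathfrak g$ such that \emph{every} pair $(a,b)$ with $[a,b]=c$ has $C_{\mathfrak g}(a)\cap C_{\mathfrak g}(b)\neq 0$. The paper takes $c$ in the minimal nilpotent orbit, i.e.\ a rank-$1$ matrix in the natural representation. The decisive type-specific ingredient---which your proposal does not identify---is Guralnick's lemma: if $\rk([a,b])\le 1$ then $a$ and $b$ lie in a common Borel subalgebra; this holds precisely in types $\mathrm A$ and $\mathrm C$ and fails in the others. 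The almost commuting scheme and Luna's \'etale slice, which you correctly guessed from the keywords, are then used to show that any such $(a,b)$ has a nonzero common centralizer, but they are applied to pairs with rank-$1$ commutator (that is what ``almost commuting'' means here), not to commuting pairs coming from $z(0)=0$. Your choice $z(0)=0$ heads in the wrong direction: as you yourself note, the resulting first-order constraint can always be satisfied, and the appeal to unspecified ``interactions between different orders'' is not an argument. In the paper's proof the obstruction is visible already at orders $0$ and $1$ once the right nonzero $z_0$ is chosen; no higher-order analysis is needed.
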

Some arguments supporting the conjecture for the types other than
$\mathrm A_n$ or $\mathrm C_n$ will be given later, in Section
\ref{sec:concl}.

We deduce from (the proof of) Theorem \ref{lower} some results
on other current algebras.

\begin{corollary} \label{cor:kt}
Let $\mathfrak g=\mathfrak{sl}_n$ or $\mathfrak{sp}_{2n}$ $(n\ge 2)$.
Then for $A=\Bbbk[t]$ the width of $\mathfrak g\otimes_{\Bbbk}A$ is equal to 2.
\end{corollary}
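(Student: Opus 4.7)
The upper bound $\le 2$ for the width of $\mathfrak g \otimes_{\Bbbk} \Bbbk[t]$ is immediate from Theorem \ref{upper} applied with $A = \Bbbk[t]$. The remaining task is to produce a single element of $\mathfrak g \otimes \Bbbk[t]$ that is not a commutator, and the plan is to transport one from the power-series setting of Theorem \ref{lower}(ii). The vehicle for the transport is the natural Lie-algebra embedding
\[
\iota : \mathfrak g \otimes_{\Bbbk}\Bbbk[t] \hookrightarrow \mathfrak g \otimes_{\Bbbk}\Bbbk[[t]]
\]
induced by the ring inclusion $\Bbbk[t] \subset \Bbbk[[t]]$. The key (elementary) observation is that any single-commutator decomposition $z = [x,y]$ valid in the polynomial algebra remains valid in the power-series algebra after applying $\iota$; contrapositively, if $z \in \mathfrak g \otimes \Bbbk[t]$ and $\iota(z)$ has bracket length $\ge 2$ in $\mathfrak g \otimes \Bbbk[[t]]$, then $z$ has bracket length $\ge 2$ already in $\mathfrak g \otimes \Bbbk[t]$.

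The concrete step is to inspect the argument behind Theorem \ref{lower}(ii) for each of types $\mathrm A_n$ and $\mathrm C_n$ with $n \ge 2$, and to verify that the element it singles out -- the one witnessing that the width of $\mathfrak g \otimes \Bbbk[[t]]$ is at least $2$ -- lies already in $\mathfrak g \otimes \Bbbk[t]$. For these two families of classical Lie algebras the witness is expected to be of low $t$-degree (say, of the form $x_0 + x_1 t$ with $x_0, x_1 \in \mathfrak g$ chosen to make an explicit algebraic invariant detecting non-commutator-ness nonzero), so polynomiality should be transparent. Once this is secured, the corollary follows upon combining the transferred lower bound with Theorem \ref{upper}.

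The main potential obstacle is a scenario in which the proof of Theorem \ref{lower}(ii) produces the witness only as a formal power series. In that case one would additionally need to argue that the obstruction to single-bracket representability -- which, given the methods indicated in the paper (slice theorem, almost commuting varieties), is algebro-geometric in nature -- is already detected on a finite $t$-adic jet of $z$, and hence survives replacing the witness by a suitable polynomial truncation. This is precisely the sense in which Corollary \ref{cor:kt} is presented as a consequence of the \emph{proof} of Theorem \ref{lower} rather than merely its statement.
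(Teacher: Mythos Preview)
Your approach is correct, but it takes a different route from the paper's. You transport a non-commutator from $\mathfrak g\otimes\Bbbk[[t]]$ back to $\mathfrak g\otimes\Bbbk[t]$ via the \emph{inclusion} $\Bbbk[t]\hookrightarrow\Bbbk[[t]]$, which requires checking that the witness produced in the proof of Theorem~\ref{lower}(ii) is actually a polynomial. This does hold: the necessity direction of Proposition~\ref{prop:crit}(i) uses only the zeroth and first equations of the system~\eqref{eq:bracket2}, so once condition~$(*)$ fails the non-commutator can be taken of the form $z_0+z_1t$, confirming your expectation.

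The paper instead argues via the \emph{quotient} $\Bbbk[t]\twoheadrightarrow\Bbbk[t]/(t^2)$: any Lie-algebra surjection sends commutators to commutators, so a non-commutator in $\mathfrak g\otimes\Bbbk[t]/(t^2)$ lifts to one in $\mathfrak g\otimes\Bbbk[t]$. This is exactly the content of Proposition~\ref{prop:crit}(ii), and it yields the more general Corollary~\ref{cor:irred} (any $A$ with a two-dimensional $\Bbbk$-algebra quotient works), of which Corollary~\ref{cor:kt} is then the special case $\mathfrak a=t^2\Bbbk[t]$. The quotient argument buys this extra generality and also avoids having to inspect the form of the witness, since the obstruction manifestly lives in the two-dimensional quotient; your inclusion argument is perfectly valid for $A=\Bbbk[t]$ (or any subring of $\Bbbk[[t]]$) but does not directly deliver Corollary~\ref{cor:irred}.
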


This statement can be generalized to a wider class of rings $A$ as follows.


\begin{corollary} \label{cor:irred}
Let $\mathfrak g=\mathfrak{sl}_n$ or $\mathfrak{sp}_{2n}$ $(n\ge 2)$.
Let $A$ be a ring containing an ideal $\mathfrak a$
such that the quotient $\bar A=A/\mathfrak a$ is
a two-dimensional $\Bbbk$-algebra.
Then the width of $\mathfrak g\otimes_{\Bbbk}A$ is equal to 2.

\end{corollary}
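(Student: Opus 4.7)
The plan is to combine the upper bound of Theorem~\ref{upper} with the lower-bound construction from the proof of Theorem~\ref{lower}(ii). By Theorem~\ref{upper} every element of $\mathfrak g\otimes_{\Bbbk} A$ has length at most $2$, so the task reduces to exhibiting a single element of length exactly~$2$. The canonical projection $A\twoheadrightarrow\bar A$ tensored with $\mathfrak g$ gives a surjective Lie algebra homomorphism $\pi:\mathfrak g\otimes_{\Bbbk} A\twoheadrightarrow\mathfrak g\otimes_{\Bbbk}\bar A$, and since Lie homomorphisms cannot increase bracket length, it suffices to produce $\bar z\in\mathfrak g\otimes_{\Bbbk}\bar A$ with $\ell(\bar z)\ge 2$ and lift to any preimage in $\mathfrak g\otimes_{\Bbbk} A$.

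A $2$-dimensional commutative unital $\Bbbk$-algebra is isomorphic either to $\Bbbk\times\Bbbk$ or to the dual numbers $\Bbbk[t]/(t^2)$; since $\mathfrak g\otimes(\Bbbk\times\Bbbk)\cong\mathfrak g\oplus\mathfrak g$ has width $1$, only the local (``irreducible'') case $\bar A\cong\Bbbk[t]/(t^2)$ can yield a length-$2$ witness. For this $\bar A$ the natural surjection $\Bbbk[[t]]\twoheadrightarrow\Bbbk[t]/(t^2)$ induces $\mathfrak g\otimes_{\Bbbk}\Bbbk[[t]]\twoheadrightarrow\mathfrak g\otimes_{\Bbbk}\bar A$, and the problem becomes: verify that the length-$2$ element constructed in the proof of Theorem~\ref{lower}(ii) for $\mathfrak g\otimes\Bbbk[[t]]$ still has length $2$ after reduction modulo $t^2$.

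Concretely, writing an element $z=z_0+tz_1$ of $\mathfrak g\otimes\Bbbk[t]/(t^2)$ as a single bracket $[X_0+tX_1,\,Y_0+tY_1]$ amounts to the coupled conditions $z_0=[X_0,Y_0]$ and $z_1=[X_0,Y_1]+[X_1,Y_0]$; taking $z_0=0$ forces $(X_0,Y_0)$ into the commuting variety of $\mathfrak g$, and then $z_1$ must lie in $[X_0,\mathfrak g]+[\mathfrak g,Y_0]$ for some commuting pair. The almost-commuting-variety analysis underlying Theorem~\ref{lower}(ii) produces, for $\mathfrak g=\mathfrak{sl}_n$ or $\mathfrak{sp}_{2n}$ with $n\ge 2$, an explicit $z_1$ outside the union of these subspaces, so the length-$2$ obstruction is already visible modulo $t^2$ and pulls back through $\pi$ to the desired element in $\mathfrak g\otimes_{\Bbbk} A$.

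The main technical point is the last one: one must confirm that the lower-bound construction in Theorem~\ref{lower}(ii) relies only on the first-order truncation $\Bbbk[t]/(t^2)$ and not on higher-order data from $\Bbbk[[t]]$. Granted this, the rest of the argument is a formal manipulation of surjective Lie algebra homomorphisms, and Corollary~\ref{cor:irred} follows at once.
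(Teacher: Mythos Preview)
Your overall structure---use Theorem~\ref{upper} for the upper bound and push the lower bound down through the surjection $\mathfrak g\otimes A\twoheadrightarrow\mathfrak g\otimes\bar A$---is the same as the paper's, which invokes Proposition~\ref{prop:crit}(ii) together with the failure of condition~$(*)$ established in the proof of Theorem~\ref{lower}(ii). However, the concrete witness you propose is wrong.

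You take $z_0=0$, so your candidate is $z=z_1\otimes t\in\mathfrak g\otimes\Bbbk[t]/(t^2)$. But every such element is a single bracket: since $\mathfrak g$ is simple it has bracket width~$1$, so $z_1=[a,b]$ for some $a,b\in\mathfrak g$, and then $z_1\otimes t=[a\otimes 1,\,b\otimes t]$. Phrased in your language, the commuting pair $(X_0,Y_0)=(0,b)$ already gives $z_1\in[X_0,\mathfrak g]+[\mathfrak g,Y_0]=\im(\ad b)$. So no $z_1$ lies outside the union you describe, and the almost-commuting variety analysis of Theorem~\ref{lower}(ii) does not furnish one: that analysis is about pairs with $[a,b]=c$ for a fixed \emph{nonzero} rank-one $c$, not about commuting pairs.

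The paper's route is to take $z_0=c$ in the minimal nilpotent orbit. Theorem~\ref{lower}(ii) shows that every pair $(a,b)$ with $[a,b]=c$ has nonzero common centralizer, i.e.\ $\im(\ad a)+\im(\ad b)\subsetneq\mathfrak g$ by Lemma~\ref{lem:reform}; this is exactly the negation of condition~$(*)$, and Proposition~\ref{prop:crit}(ii) then forces the width of $\mathfrak g\otimes A$ above~$1$. Your observation that the projection method cannot succeed when $\bar A\cong\Bbbk\times\Bbbk$ is correct and worth flagging; the intended case (as the label \texttt{cor:irred} suggests) is the local quotient $\bar A\cong\Bbbk[t]/(t^2)$.
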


\section{Proofs} 

We begin with the following general statement on finite-dimensional
simple Lie algebras \cite[Theorem 26]{BN11}.

\begin{proposition}\label{commutatorsregular} Let $\mathfrak{g}$ be a simple finite-dimensional Lie algebra defined over an arbitrary infinite field of characteristic not $2$ or $3$. Then
   there exist $w_1, w_2 \in \mathfrak{g}$ such that  \[\mathfrak{g}=[w_1,\mathfrak{g}]+[w_2,\mathfrak{g}].\]
\end{proposition}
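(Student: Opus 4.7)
The plan is to realize $w_1$ as a regular semisimple element of a Cartan subalgebra and $w_2$ as a sum of simple-root vectors, and then verify directly that $[w_1,\mathfrak{g}]$ and $[w_2,\mathfrak{g}]$ together span $\mathfrak{g}$. The condition $\mathfrak{g}=[w_1,\mathfrak{g}]+[w_2,\mathfrak{g}]$ is the maximal-rank condition on the linear map $\mathfrak{g}\oplus\mathfrak{g}\to\mathfrak{g}$, $(x,y)\mapsto [w_1,x]+[w_2,y]$, hence cuts out a Zariski-open subvariety of $\mathfrak{g}\times\mathfrak{g}$. It is therefore enough to exhibit one such pair over an algebraic closure $\bar\Bbbk$; the conclusion over an arbitrary infinite field will then follow because a nonempty Zariski-open subvariety defined over an infinite field $\Bbbk$ has a $\Bbbk$-rational point.

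Over $\bar\Bbbk$, I would fix a Cartan subalgebra $\mathfrak{h}\subset\mathfrak{g}$, a root system $\Phi$, a choice of simple roots $\Delta=\{\alpha_1,\ldots,\alpha_r\}$, and a Chevalley-type collection of root vectors $\{e_\alpha,f_\alpha\}_{\alpha\in\Phi^+}$ together with the coroots $h_{\alpha_i}$. Choose $w_1\in\mathfrak{h}$ regular, i.e., $\alpha(w_1)\neq 0$ for every $\alpha\in\Phi$. Then $\ad(w_1)$ is diagonal with respect to the root decomposition, annihilates $\mathfrak{h}$, and acts on $\mathfrak{g}_\alpha$ by the nonzero scalar $\alpha(w_1)$. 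Hence $[w_1,\mathfrak{g}]=\bigoplus_{\alpha\in\Phi}\mathfrak{g}_\alpha$, and the only piece of $\mathfrak{g}$ missing from this image is $\mathfrak{h}$.

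For the second element I would set $w_2=\sum_{i=1}^r e_{\alpha_i}$. For distinct simple roots $\alpha_i$ and $\alpha_j$, the difference $\alpha_i-\alpha_j$ is not a root (otherwise $\alpha_i$ or $\alpha_j$ would be expressible as a sum of two positive roots, contradicting its simplicity), so $[e_{\alpha_i},f_{\alpha_j}]=0$ whenever $i\neq j$. Consequently $[w_2,f_{\alpha_i}]=[e_{\alpha_i},f_{\alpha_i}]=h_{\alpha_i}$, and since $\{h_{\alpha_1},\ldots,h_{\alpha_r}\}$ is a basis of $\mathfrak{h}$, we conclude $\mathfrak{h}\subset[w_2,\mathfrak{g}]$. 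Combined with the previous paragraph this gives $\mathfrak{g}=[w_1,\mathfrak{g}]+[w_2,\mathfrak{g}]$.

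The computations themselves are elementary once the standard structure theory is available, so the main obstacle is technical: descending from $\bar\Bbbk$ to an arbitrary infinite field $\Bbbk$, especially when $\mathfrak{g}$ is only simple but not absolutely simple (so that $\mathfrak{g}\otimes\bar\Bbbk$ decomposes as a direct sum of Galois-conjugate simple factors), and checking that the characteristic hypothesis "$\neq 2,3$" really is enough both for the existence of a Chevalley basis satisfying the identities used above and for the nondegeneracy that guarantees the root-space decomposition after base change. Once those points are confirmed, the density argument delivers a $\Bbbk$-rational pair $(w_1,w_2)$ realizing the desired equality.
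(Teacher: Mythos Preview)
The paper does not prove this proposition at all: it is quoted verbatim from Bergman--Nahlus \cite[Theorem~26]{BN11} and used as a black box to derive Theorem~\ref{upper}. So there is no ``paper's own proof'' to compare with, and in the paper's setting ($\Bbbk$ algebraically closed of characteristic zero) the full generality of the statement is never needed.

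That said, your sketch is a correct and standard argument in characteristic zero. The choice $w_1\in\mathfrak h$ regular gives $[w_1,\mathfrak g]=\bigoplus_{\alpha}\mathfrak g_\alpha$, and your computation $[w_2,f_{\alpha_j}]=h_{\alpha_j}$ (using that $\alpha_i-\alpha_j$ is never a root for distinct simple roots) is clean and yields $\mathfrak h\subset [w_2,\mathfrak g]$. The descent via the Zariski-open maximal-rank locus is also fine: the condition is given by the nonvanishing of a polynomial with $\Bbbk$-coefficients on the affine space $\mathfrak g\times\mathfrak g$, and over an infinite field a nonzero polynomial has a nonvanishing $\Bbbk$-point. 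The non--absolutely-simple case is handled componentwise exactly as you indicate.

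Your caveat about positive characteristic is well placed and is the genuine obstruction to claiming the full statement. Over $\bar\Bbbk$ with $\operatorname{char}\Bbbk=p>3$ the simple Lie algebras are not exhausted by those of classical (Chevalley) type; Cartan-type and other algebras enter, and for these your root-space argument does not apply as written. So your sketch establishes the proposition in the characteristic-zero case actually used in the paper, while the stated generality over all infinite fields of characteristic $\ne 2,3$ really does require the work in \cite{BN11}.
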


This immediately implies Theorem \ref{upper}.

\begin{proof}[Proof of Theorem \ref{upper}]

 Consider a linear basis of $A$, $A=\langle 1=a_0,a_1,a_2,\dots \rangle$.
We have
\[\mathfrak{g}\otimes A=\mathfrak{g}\otimes 1\oplus\mathfrak{g}\otimes a_1\oplus\mathfrak{g}\otimes a_2\oplus\dots\]

 Any element $z$ of  $\mathfrak{g}\otimes A$ can be written in the form $z=\sum_{i=0}^k z_i\otimes \alpha_ia_i$ with $z_i \in \mathfrak{g}$, $\alpha_i \in \Bbbk$. By Proposition \ref{commutatorsregular}, for every $z_i$
 there exist $x_i,y_i \in \mathfrak g$ such that
\[z_i=[w_1,x_i]+[w_2,y_i].\]

Thus, we have:
\[z=\sum_{i=0}^k z_i\otimes \alpha_ia_i=\left[w_1, \sum_{i=0}^k x_i\otimes \alpha_ia_i \right]+\left[w_2, \sum_{i=0}^k y_i\otimes  \alpha_ia_i \right].\]
This completes the proof.
\end{proof}



\begin{proof}[Proof of Theorem \ref{lower}]
Our first step consists in reformulating the property of $L:=\mathfrak g\otimes \Bbbk[[t]]$ to be of bracket width one as some condition on $\mathfrak g$.

\begin{proposition} \label{prop:crit}

\begin{enumerate}
\item[{}]
\item[(i)] The bracket width of $L$ is equal to $1$ if and only if $\mathfrak g$ satisfies the following condition~$(*)$:
every nonzero element $c\in \mathfrak g$ can be
represented as a bracket of elements without common centralizer, i.e. there exist $a,b\in \mathfrak g$ such that $c=[a,b]$
and
\begin{equation} \label{eq:central}
C_{\mathfrak g}(a)\cap C_{\mathfrak g}(b)=(0).
\end{equation}
\item[(ii)] Assume that $A$ 
satisfies the conditions of Corollary \ref{cor:irred}. Then condition $(*)$ is necessary for $\mathfrak g \otimes A$ to be of bracket width $1$.
\end{enumerate}
\end{proposition}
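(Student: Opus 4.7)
The plan is to establish the two directions of (i) separately, deducing (i)$(\Rightarrow)$ from (ii). For (i)$(\Leftarrow)$, I would run a Hensel-type inductive lifting. Given a nonzero $Z = \sum_{k \geq 0} z_k \otimes t^k \in L$, I factor out the smallest power of $t$ with a nonzero coefficient so that we may assume $z_0 \neq 0$. Applying $(*)$, write $z_0 = [a, b]$ with $C_\mathfrak{g}(a) \cap C_\mathfrak{g}(b) = 0$. The key observation is that, by Killing-form duality on the simple Lie algebra $\mathfrak{g}$, the condition $C_\mathfrak{g}(a) \cap C_\mathfrak{g}(b) = 0$ is equivalent to surjectivity of the linear map
\[
T_{a, b}: \mathfrak{g} \oplus \mathfrak{g} \to \mathfrak{g}, \qquad (u, v) \mapsto [a, v] + [u, b].
\]
I would then construct $X = a + \sum_{k \geq 1} x_k t^k$ and $Y = b + \sum_{k \geq 1} y_k t^k$ whose commutator reproduces the factored-out part of $Z$, by solving inductively the equations $T_{a, b}(x_k, y_k) = z_k - \sum_{0 < i < k} [x_i, y_{k-i}]$; solvability at each stage is guaranteed by surjectivity of $T_{a, b}$, and the extracted power of $t$ can be absorbed into $X$ via $[t^{k_0} X, Y] = t^{k_0} [X, Y]$.

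For (i)$(\Rightarrow)$, I would apply (ii) with $A = \Bbbk[[t]]$ and $\mathfrak{a} = (t^2)$, so $\bar A = \Bbbk[\epsilon]/(\epsilon^2)$ is a two-dimensional $\Bbbk$-algebra. Turning to (ii), the quotient map $\mathfrak{g} \otimes A \twoheadrightarrow \mathfrak{g} \otimes \bar A$ preserves having width $\leq 1$, and since $\Bbbk$ is algebraically closed, $\bar A \cong \Bbbk[\epsilon]/(\epsilon^2)$ (the alternative $\Bbbk \times \Bbbk$ is excluded by context). Fix $c \in \mathfrak{g} \setminus \{0\}$. Width one of $\mathfrak{g} \otimes \bar A$ says that for every $z \in \mathfrak{g}$ the element $c \otimes 1 + z \otimes \epsilon$ is a single commutator $[x_0 + x_1 \epsilon, y_0 + y_1 \epsilon]$, forcing $[x_0, y_0] = c$ and $T_{x_0, y_0}(x_1, y_1) = z$. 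Setting $V_c := \{(a, b) \in \mathfrak{g}^2 : [a, b] = c\}$, this translates via Killing duality into the statement
\[
\bigcap_{(a, b) \in V_c}\bigl(C_\mathfrak{g}(a) \cap C_\mathfrak{g}(b)\bigr) = 0.
\]
The task is then to upgrade this to the existence of a single $(a, b) \in V_c$ with $C_\mathfrak{g}(a) \cap C_\mathfrak{g}(b) = 0$, which is precisely condition $(*)$ for $c$.

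The main obstacle is exactly this upgrade, since triviality of the intersection does not automatically imply triviality of some fiber. I would attack it via generic smoothness: the morphism $\Psi: V_c \times \mathfrak{g}^2 \to \mathfrak{g}$, $((a, b), u, v) \mapsto T_{a, b}(u, v)$, is surjective by the above, hence in characteristic zero it is smooth over a dense open of $\mathfrak{g}$, so $d\Psi$ is surjective at a generic source point. A tangent/cotangent computation via the Killing form should identify the obstruction to surjectivity of $d\Psi$ at $((a, b), u, v)$ with a nonzero vector in $C_\mathfrak{g}(a) \cap C_\mathfrak{g}(b)$; if so, surjectivity of $d\Psi$ forces the common centralizer to vanish at generic $(a, b) \in V_c$, yielding $(*)$. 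Making this identification precise is the technical heart of the argument, because the differential along $V_c$-directions contributes extra terms $[\delta a, v] + [u, \delta b]$ that must be carefully reconciled; and the conclusion must be shown to hold for \emph{every} nonzero $c$, not merely generic $c$. Should the direct cotangent identification fall short, a natural fallback is to run the generic-smoothness argument slice-by-slice along each fixed $c$, combined with a specific analysis of how the centralizer bundle $(a, b) \mapsto C_\mathfrak{g}(a) \cap C_\mathfrak{g}(b)$ degenerates on $V_c$.
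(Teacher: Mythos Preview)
Your sufficiency argument for (i) is correct and coincides with the paper's: Lemma~\ref{lem:reform} converts $(*)$ into surjectivity of $T_{a,b}$, and then one solves the system~\eqref{eq:bracket2} recursively, exactly as you describe.

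For necessity (both (i)$(\Rightarrow)$ and (ii)) the paper is much terser than you. It simply writes down the first two equations of~\eqref{eq:bracket2} and asserts ``by Lemma~\ref{lem:reform}, condition~$(*)$ holds,'' without ever discussing the dependence of $(x_0,y_0)$ on $z_1$. So the ``upgrade'' you are worrying about is not carried out in the paper at all. You are right that width~$1$ yields, a priori, only
\[
\bigcup_{(a,b)\in V_c}\im T_{a,b}=\mathfrak g
\]
(your Killing-dual reformulation as $\bigcap_{(a,b)\in V_c} C_{\mathfrak g}(a)\cap C_{\mathfrak g}(b)=0$ is in fact a strict weakening of this), and that passing to a single $(a,b)$ with $T_{a,b}$ onto requires justification. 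In that sense your proposal is more scrupulous than the paper's own argument.

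However, your proposed fix via generic smoothness does not work, for exactly the reason you already flag. Surjectivity of $d\Psi$ at $((a,b),u,v)$ gives only
\[
\im T_{a,b}\;+\;\bigl\{[\delta a,v]+[u,\delta b]:(\delta a,\delta b)\in T_{(a,b)}V_c\bigr\}=\mathfrak g,
\]
and no Killing-form identification forces the second summand into $\im T_{a,b}$. A toy model shows the method cannot succeed in this generality: $\Psi\colon\Bbbk^{2}\times\Bbbk\to\Bbbk^{2}$, $((a,b),t)\mapsto t\cdot(a,b)$, is surjective with surjective differential at every point with $t\neq 0$, yet for each fixed $(a,b)$ the image of $t\mapsto t(a,b)$ is a proper line. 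Your slice-by-slice fallback inherits the same defect. So the sufficiency half of your proposal matches the paper, while the necessity half correctly isolates a subtlety the paper glosses over but does not itself close it.
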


The following  simple lemma is needed for the proof of Proposition \ref{prop:crit}.
\begin{lemma} \label{lem:reform}
Condition \eqref{eq:central} is equivalent to the following one:
\begin{equation} \label{eq:image}
\im(\ad a) + \im (\ad b) = \mathfrak g.
\end{equation}
\end{lemma}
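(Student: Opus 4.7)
The plan is to exploit the Killing form $\kappa$ on $\mathfrak g$, which is non-degenerate since $\mathfrak g$ is a finite-dimensional simple Lie algebra over a field of characteristic zero. The two conditions \eqref{eq:central} and \eqref{eq:image} involve the kernel and image of $\ad a$ and $\ad b$ respectively, and the Killing form is exactly the tool that exchanges these by orthogonal complement.

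First I would record the elementary linear-algebra fact that for any non-degenerate symmetric bilinear form on a finite-dimensional space and any linear operator $T$ on that space, $(\im T)^\perp = \ker(T^*)$, and that orthogonal complement reverses sums and intersections: $(U+V)^\perp = U^\perp \cap V^\perp$. Next I would compute the adjoint of $\ad x$ with respect to $\kappa$: using the invariance identity $\kappa([x,y],z)=\kappa(x,[y,z])$, one sees that
\[
\kappa(\ad x(y),z)=\kappa([x,y],z)=-\kappa(y,[x,z])=\kappa(y,-\ad x(z)),
\]
so $(\ad x)^*=-\ad x$, and in particular $\ker(\ad x)^*=\ker(\ad x)=C_{\mathfrak g}(x)$.

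Putting these together, for any $a,b\in\mathfrak g$,
\[
\bigl(\im(\ad a)+\im(\ad b)\bigr)^\perp=\im(\ad a)^\perp\cap\im(\ad b)^\perp=C_{\mathfrak g}(a)\cap C_{\mathfrak g}(b).
\]
Since $\kappa$ is non-degenerate, a subspace $W\subseteq\mathfrak g$ equals $\mathfrak g$ if and only if $W^\perp=(0)$. Applying this to $W=\im(\ad a)+\im(\ad b)$ gives the required equivalence of \eqref{eq:central} and \eqref{eq:image}.

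I do not anticipate any real obstacle here: the whole argument rests on non-degeneracy of the Killing form and its invariance, both of which hold under the standing hypotheses on $\mathfrak g$ and $\Bbbk$. The only mild point to be explicit about is the sign in $(\ad x)^*=-\ad x$, which does not affect the kernel, so the orthogonal-complement computation goes through cleanly.
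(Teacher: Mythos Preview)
Your proof is correct and follows essentially the same approach as the paper: both use the non-degeneracy and invariance of the Killing form to identify $(\im(\ad a)+\im(\ad b))^\perp$ with $C_{\mathfrak g}(a)\cap C_{\mathfrak g}(b)$, from which the equivalence is immediate. Your packaging via $(\ad x)^*=-\ad x$ and the general identity $(\im T)^\perp=\ker T^*$ is slightly more structural than the paper's direct element-chasing contradiction argument, but the underlying idea is identical.
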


\noindent{\it {Proof of Lemma \ref{lem:reform}.}}
Let $(,)$ denote the Killing form on $\mathfrak g$, and let
$V \subset \mathfrak g$ denote the orthogonal complement to $\im(\ad a) + \im (\ad b)$.

Suppose that condition \eqref{eq:central} holds and prove \eqref{eq:image}.
Assume to the contrary that \eqref{eq:image} does not hold, i.e. $V\ne (0).$
Let $d$ be a nonzero element of $V$. Then for any $e\in \mathfrak g$ we have
$([e,a],d)=([e,b],d)=0$. As the Killing form is invariant, this gives
$(e,[a,d])=(e,[b,d])=0.$ Since $e$ is an arbitrary element of $\mathfrak g$
and the Killing form is non-degenerate, we have $[a,d]=[b,d]=0$, i.e.
$d$ centralizes both $a$ and $b$, contradiction.

Conversely, suppose that condition \eqref{eq:image} holds and prove \eqref{eq:central}. Assume to the contrary that $C_{\mathfrak g}(a)\cap C_{\mathfrak g}(b)\ne (0).$ Let $d\ne 0$ centralize both $a$ and $b$. Then
the same argument as above shows that $d\in V$, contradiction. \qed

\begin{proof}[Proof of  Proposition \ref{prop:crit}]

(i) Suppose that $\mathfrak g$ satisfies condition $(*)$ and show that the bracket width of $L=\mathfrak g\otimes \Bbbk[[t]]$ is equal to 1.
Let
\[
z=z_0+z_1\otimes t+z_2\otimes t^2+\dots, \quad z_i\in \mathfrak g,
\]
be an arbitrary element of $L$. We want to represent it as $z=[x,y]$ where
\[
x=x_0+x_1\otimes t+x_2\otimes t^2+\dots, \quad y=y_0+y_1\otimes t+y_2\otimes t^2+\dots,\quad x_i, y_i\in \mathfrak g,
\]
which gives the equation
\[
\sum_{k=0}^{\infty}\sum_{i+j=k} [x_i,y_j]\otimes t^k = \sum_{k=0}^{\infty} z_k \otimes t^k,
\]
which, in turn, yields the system of equations
\begin{equation} \label{eq:bracket2}
\begin{aligned}
\relax
[x_0,y_0] &= z_0 \\
[x_0,y_1]+[x_1,y_0] &= z_1 \\
&\dots \\
[x_0,y_k]+[x_k,y_0] &= z_k - \sum_{i=1}^{k-1}[x_i,y_{k-i}] \\
&\dots
\end{aligned}
\end{equation}
Without loss of generality we can assume $z_0\ne 0$.
Condition $(*)$ together with Lemma \ref{lem:reform} allows one to find $x_0$, $y_0$ and then $x_1$, $y_1$. By induction,
we find all other $x_k$ and $y_k$.

Conversely, assuming that the bracket width of $L$ equals $1$, looking at the zeroth and first equations of the above system and
applying Lemma \ref{lem:reform} once again, we conclude that condition $(*)$ holds in $\mathfrak g$.

(ii) Suppose that $A$ satisfies the conditions of Corollary
\ref{cor:irred} and that the width of $\mathfrak g\otimes A$ is equal to $1$.
We have to show that condition $(*)$ holds. We argue as in the
necessity part of the proof of (i). Namely, let $\{1,\bar t\}$
be a linear basis of $\bar A=A/\mathfrak a$, and fix a preimage $t$ of
$\bar t$. Let
$z=z_0\otimes 1+z_1\otimes t$ be an element of  $\mathfrak g\otimes A$
with $z_0\ne 0$. Any such $z$ can be represented in the form $z=[x,y]$
with
$$
x=x_0+x_1\otimes t+\sum_{i\ge 2} x_i\otimes a_i, \quad y=y_0+y_1\otimes t +
\sum_{i\ge 2} y_i\otimes b_i
$$
with $a_i, b_i\in \mathfrak a$. We then arrive at the system consisting
of the first two equations in \eqref{eq:bracket2}. By Lemma
\ref{lem:reform}, condition $(*)$ holds in $\mathfrak g$.
\end{proof}

We now continue the proof of Theorem \ref{lower} using the criterion obtained in Proposition \ref{prop:crit}.

\medskip

\noindent{\it {Proof of Theorem \ref{lower}}} (i). This case is easy because any element $c$ of $\mathfrak g=\mathfrak{sl}_2$ is either nilpotent or semisimple.

First assume that $c$ is nilpotent. We can use the natural representation of $\mathfrak g$ and write $c=e=\left(\begin{matrix} 0 & 1 \\ 0 & 0\end{matrix}\right).$
Take $a=h/2=\diag(1/2,-1/2)$, $b=c$. We obtain $[a,b]=c$, $C_{\mathfrak g}(a) =\Span(a)$, $C_{\mathfrak g}(b) =\Span(b)$, so that
$C_{\mathfrak g}(a)\cap C_{\mathfrak g}(b) =(0)$.

Let now $c$ be semisimple, write $c=h=\left(\begin{matrix} 1 & 0 \\ 0 & -1\end{matrix}\right).$
Take $a=e = \left(\begin{matrix} 0 & 1 \\ 0 & 0\end{matrix}\right)$, $b=\left(\begin{matrix} 0 & 0 \\ 1 & 0\end{matrix}\right).$
We obtain $[a,b]=c$, $C_{\mathfrak g}(a) =\Span(a)$, $C_{\mathfrak g}(b) =\Span(b)$, so that
$C_{\mathfrak g}(a)\cap C_{\mathfrak g}(b) =(0)$, as above. Condition $(*)$ is satisfied,
hence the bracket width of $L$ is equal to $1$, as claimed.

\medskip

\noindent{\it {Proof of Theorem \ref{lower}}} (ii). We have to prove that for $\mathfrak g=\mathfrak{sl}_n$ ($n\ge 3$) and $\mathfrak g=\mathfrak{sp}_{2n}$
($n\ge 2$)
the bracket width of $L$ is greater than 1. Together with the upper estimate from Theorem \ref{upper} this will imply that
the width is equal to $2$.

We thus have to prove that $\mathfrak g$ does not satisfy condition $(*)$. This means that we have to exhibit
an element $c$ such that any $a,b$ with $[a,b]=c$ have a nonzero common centralizer. We shall choose $c$ to be
a rank 1 matrix in the natural representation of $\mathfrak g$.
This will allow us
to apply the following general lemma from linear algebra.

\begin{lemma} \label{lem:Gur} \cite{Gu}
Let $A,B$ be square matrices such that $\rk (AB-BA)\le 1$. Then one can simultaneously
conjugate $A$ and $B$ to upper triangular form.
\end{lemma}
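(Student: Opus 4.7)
The plan is to invoke McCoy's classical simultaneous triangularization criterion: over an algebraically closed field, a pair of square matrices $\{A,B\}$ is simultaneously triangularizable if and only if the product $p(A,B)\cdot [A,B]$ is nilpotent for every noncommutative polynomial $p$ in two variables. Combined with the hypothesis $\rk[A,B]\le 1$, this will reduce the entire problem to a trace identity.

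First I would note that $p(A,B)[A,B]$ has rank at most one for every $p$, and a rank-one matrix is nilpotent if and only if its trace vanishes. Setting $K:=[A,B]$ and writing $K=uv^{T}$ for suitable column and row vectors, the equality $v^{T}u=\operatorname{tr}K=0$ yields the crucial nilpotence $K^{2}=uv^{T}uv^{T}=0$. The lemma thus reduces to proving
\[
\operatorname{tr}\bigl(p(A,B)\cdot K\bigr)=0
\]
for every noncommutative polynomial $p$.

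The engine of the computation would be the identity
\[
\operatorname{tr}(p\cdot K)=\operatorname{tr}([p,A]\cdot B),
\]
immediate from cyclicity of trace. Expanding $[p,A]$ via the derivation rule $[xy,A]=x[y,A]+[x,A]y$ together with $[B,A]=-K$ rewrites the right-hand side as a signed sum of expressions $\operatorname{tr}(q(A,B)\cdot K)$. The base cases $\operatorname{tr}K=\operatorname{tr}(AK)=\operatorname{tr}(BK)=0$ follow immediately from cyclicity, and the relation $K^{2}=0$ is precisely what collapses most of the residual configurations, typically into the self-canceling form $\operatorname{tr}(pK)=-\operatorname{tr}(pK)$.

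The main obstacle will be the bookkeeping in this induction, since a naive induction on the word length of $p$ does not succeed: the rewriting preserves both the total length and the individual counts of $A$'s and $B$'s in the resulting monomials. The real work lies in choosing the right monotone invariant, perhaps counting nested $K$-blocks produced under repeated expansion, and exploiting $K^{2}=0$ to truncate each chain before it proliferates. Once the trace vanishing is established for all $p$, McCoy's theorem yields a common eigenvector for $A$ and $B$; quotienting by its span preserves the rank-one commutator hypothesis, so a routine induction on matrix size delivers the full simultaneous triangularization.
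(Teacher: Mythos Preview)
The paper does not supply its own proof of this lemma: it is quoted from Guralnick \cite{Gu}, with a remark pointing to an alternative argument (attributed to Rudakov) in \cite[Lemma~12.7]{EG}. So there is no in-paper proof to compare against, and your proposal must stand on its own.

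Your strategy via McCoy's criterion is reasonable and well known, and the reductions you make are correct: a rank~$\le 1$ matrix is nilpotent iff its trace vanishes, so the whole statement is equivalent to $\operatorname{tr}\bigl(p(A,B)K\bigr)=0$ for all words $p$; and $K^{2}=0$ together with the cyclicity identity $\operatorname{tr}(pK)=\operatorname{tr}([p,A]B)$ are exactly the right ingredients. The problem is that you stop precisely where the content lies. You yourself observe that the rewriting $\operatorname{tr}(pK)=-\sum\operatorname{tr}(q_jK)$ keeps the total $A$- and $B$-degrees fixed, so it cannot drive an induction by itself; and the suggestion of ``counting nested $K$-blocks produced under repeated expansion'' is not a proof --- no such invariant is defined, and none is evidently monotone under your rewriting rule. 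As written, this is a genuine gap: the heart of the argument is missing.

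The fix does not come from a cleverer bookkeeping on the same rewriting, but from one further consequence of rank~$\le 1$ that you have not used. Writing $K=uv^{T}$ and $\phi(X):=v^{T}Xu=\operatorname{tr}(XK)$, one has the multiplicativity
\[
\phi(XKY)=v^{T}Xu\cdot v^{T}Yu=\phi(X)\,\phi(Y),
\]
equivalently $\operatorname{tr}(XKYK)=\operatorname{tr}(XK)\operatorname{tr}(YK)$. This turns every appearance of an ``inner'' $K$ into a product of values of $\phi$ on strictly shorter words, so a straightforward induction on the length of $p$ goes through: modulo the ideal generated by $K$ one reduces to $\phi(A^{i}B^{j})$, and the relation $\phi(A^{i}B^{j})=-\sum_{k=0}^{i-1}\phi(A^{i-1-k}B^{j}A^{k+1})$ together with the inductive hypothesis forces $(i+1)\phi(A^{i}B^{j})=0$. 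Neither Guralnick's original proof nor the Rudakov/Etingof--Ginzburg argument proceeds via McCoy; both find a common eigenvector directly. Your route is legitimate, but it only becomes a proof once the multiplicative identity above is brought in.
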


\begin{remark}
See \cite[Lemma~12.7]{EG} for an alternative proof of Guralnick's lemma (attributed to Rudakov).
\end{remark}

We now go over to a more general set-up, using the notion of
{\it almost commuting scheme} of $\mathfrak g$, see \cite{GG}, \cite{Lo}. First, let us define it
for $\mathfrak g=\mathfrak{sl}_n$.

Let $R$ denote the vector space $\mathfrak {sl}_n^{\oplus 2}\oplus \mathbb C^n \oplus (\mathbb C^n)^*$. The subscheme $M_n\subset R$ is defined  as
\begin{equation} \label{comm-A}
\{ (x,y,i,j)\in \mathfrak {sl}_n^{\oplus 2}\oplus \mathbb C^n \oplus (\mathbb C^n)^* \mid  [x,y]+ij=0  \}
\end{equation}
and is called the almost commuting scheme of $\mathfrak{sl}_n$.

In a similar way, for $\mathfrak g=\mathfrak{sp}_{2n}$ we consider its
natural representation $\mathbb C^{2n}$, identify $S^2(\mathbb C^{2n})$
with $\mathfrak{sp}_{2n}$ and thus view $i^2\in S^2(\mathbb C^{2n})$
as an element of $\mathfrak{sp}_{2n}$. The almost commuting scheme $X_n$
of $\mathfrak{sp}_{2n}$ is then defined similarly to \eqref{comm-A}:
\begin{equation} \label{comm-C}
X_n := \{(x,y,i)\}\in \mathfrak {sp}_{2n}^{\oplus 2}\oplus \mathbb C^{2n} \ | \  [x,y]+i^2=0\}.
\end{equation}

Note that both varieties carry a natural action of $G=\SL_n$ or $\Sp_{2n}$,
respectively. Say, $G=\SL_n$ acts on $M_n$ by the formula
$$
g(x,y,i,j)=(gxg^{-1}, gyg^{-1}, gi, jg^{-1}).
$$
Note that such an action on $M_n$ is well-defined since $(gxg^{-1}, gyg^{-1}, gi, jg^{-1}) \in M_n$ whenever $(x,y,i,j) \in M_n$ as the following computations show:
\[
[gxg^{-1}, gyg^{-1}] + gijg^{-1} = g[x,y]g^{-1} + gijg^{-1} = g(-ij)g^{-1} + gijg^{-1} = 0.
\]

We have the following generalization of Guralnick's
lemma, see \cite[Lemma~12.7]{EG} and \cite[Lemma~2.1]{Lo}.

\begin{lemma} \label{lem:tri}
Let $(x,y,i,j)\in M_n$ $($resp. $(x,y,i)\in X_n)$. Then there is a Borel
subalgebra $\mathfrak b$ of $\mathfrak g$ that contains both $x$ and $y$. \qed
\end{lemma}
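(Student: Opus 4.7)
The plan is to reduce both cases to a simultaneous triangularization problem in $\mathfrak{gl}(V)$ and then either invoke Guralnick's lemma directly (for type A) or refine the triangularizing flag to an isotropic one (for type C).

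First, I would unpack the defining equations. In the $\mathfrak{sl}_n$ case one has $[x,y]=-ij$, and since $ij$ is the outer product of a column and row, $\rk[x,y]\le 1$. In the $\mathfrak{sp}_{2n}$ case, under the identification $S^{2}(\mathbb C^{2n})\cong \mathfrak{sp}_{2n}$, the element $i^{2}$ corresponds to a symmetric rank-at-most-one matrix, so again $\rk[x,y]\le 1$ in $\mathfrak{gl}_{2n}$. Lemma \ref{lem:Gur} then supplies a basis of $V$ in which $x$ and $y$ are simultaneously upper triangular, i.e.\ a complete flag $V_{1}\subset V_{2}\subset\cdots\subset V_{\dim V}$ stable under both.

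For type A this is essentially the whole argument: the stabilizer of the flag is a Borel of $\mathfrak{gl}_n$, and intersecting with $\mathfrak{sl}_n$ yields a Borel $\mathfrak b\subset\mathfrak g$ containing $x$ and $y$.

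For type C the remaining task is to promote the common flag to a symplectic flag $V_{1}\subset\cdots\subset V_{n}\subset V_{n}^{\perp}\subset\cdots\subset V_{2n-1}\subset V$, with $V_{k}$ isotropic for $k\le n$; the stabilizer of such a flag is exactly a Borel of $\mathfrak{sp}_{2n}$. I would proceed by induction on $n$. Guralnick's lemma produces a common eigenline $L=\langle v\rangle$ for $x$ and $y$. The key step is to choose $v$ isotropic: if $v$ is an eigenvector of $x\in\mathfrak{sp}(V)$ with eigenvalue $\lambda$, the invariance of $\omega$ forces $v\perp_{\omega}(\text{all eigenvectors of $x$ with eigenvalue}\neq -\lambda)$, and similarly for $y$; combining this with the rank-one hypothesis on $[x,y]$, one checks that an isotropic common eigenvector always exists, possibly after replacing $v$ by a suitable member of the common eigenspace. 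Then $V':=v^{\perp}/\langle v\rangle$ carries a natural symplectic form of dimension $2(n-1)$; the pair $(x,y)$ descends to $(\bar x,\bar y)\in\mathfrak{sp}(V')^{\oplus 2}$, and since rank is non-increasing under quotients, $[\bar x,\bar y]$ still has rank $\le 1$. By induction we obtain a complete isotropic $(\bar x,\bar y)$-stable flag in $V'$; lifting it and prepending $\langle v\rangle$ yields the desired isotropic flag in $V$, hence a Borel of $\mathfrak{sp}_{2n}$ containing $x$ and $y$.

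The main obstacle is the choice of an \emph{isotropic} common eigenvector in the type C step. The danger is that every common eigenvector in a given common eigenspace could fail to be isotropic (this can happen only if eigenvalues come in $\pm$ pairs and the Gram matrix of $\omega$ on the relevant subspace is non-degenerate). I would handle this by the eigenvalue-pairing observation above, together with the two alternative arguments cited in \cite[Lemma~12.7]{EG} and \cite[Lemma~2.1]{Lo}, either of which gives a clean way past this obstruction.
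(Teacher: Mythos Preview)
The paper does not actually prove this lemma: it simply cites \cite[Lemma~12.7]{EG} for type $\mathrm A$ and \cite[Lemma~2.1]{Lo} for type $\mathrm C$ and marks the statement with \qed. So your proposal goes further than the paper does, and for type $\mathrm A$ your reduction to Guralnick's lemma is exactly right and is all that is needed.

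For type $\mathrm C$ your inductive scheme (find a common eigenline $\langle v\rangle$, pass to $v^{\perp}/\langle v\rangle$, observe that the rank of $[\bar x,\bar y]$ is still $\le 1$, and iterate) is the correct skeleton and indeed is how the cited proofs proceed. However, the ``main obstacle'' you flag is not an obstacle at all: the form $\omega$ is \emph{alternating}, so $\omega(v,v)=0$ for every $v\in V$, and every line in a symplectic space is automatically isotropic. There is therefore nothing to arrange at the first step, and at the inductive step the preimage in $v^{\perp}$ of an isotropic flag in $v^{\perp}/\langle v\rangle$ is automatically isotropic as well. Your eigenvalue-pairing remark and the worry about ``Gram matrices'' belong to the orthogonal setting, not the symplectic one; once you drop that paragraph, your argument is clean and complete. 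One small point worth making explicit for the induction hypothesis: a nonzero element of $\mathfrak{sp}(V')$ of rank $\le 1$ is always of the form $-\bar i^{\,2}$ for some $\bar i\in V'$ (over $\Bbbk$ algebraically closed), so $(\bar x,\bar y)$ really does come from a point of $X_{n-1}$.
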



We continue the proof of Theorem \ref{lower}(ii). In our new notation, we have to prove that
given any $(a,b,i,j)\in M_n$ $($resp. $(a,b,i)\in X_n)$, the elements
$a$ and $b$ have nonzero common centralizer in $\mathfrak g$.

First suppose that both $a$ and $b$ are nilpotent. Then they are both contained
in the nilradical $\mathfrak{n}$ of $\mathfrak{b}$. Since $\mathfrak{n}$ is nilpotent,
its centre is nontrivial, and any its element is a common centralizer of $a$ and $b$.

So assume that at least one of $a$ and $b$ is not nilpotent and consider
the orbit $O=G(a,b,i,j)$ (resp. $G(a,b,i)$). For the sake of brevity,
in both cases we denote it by $O_{a,b}$.

In the sequel, we shall use $Q_{\mathfrak g}$ as a common notation for $M_n$ and $X_n$.
Lemma \ref{lem:tri} implies the following description of closed
orbits   in  $Q_{\mathfrak g}$.

\begin{lemma} \label{closed}
The orbit $O_{a,b}$ is closed if and only if $a$ and $b$ are commuting
semisimple elements.
\end{lemma}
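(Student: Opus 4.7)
The plan is to combine the Borel containment from Lemma \ref{lem:tri} with the Hilbert--Mumford numerical criterion for closed orbits of a reductive group action. Both directions of the equivalence will be analyzed via a suitable cocharacter of the maximal torus of $G$.

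For the ``only if'' direction, suppose $O_{a,b}$ is closed. By Lemma \ref{lem:tri} I may assume $a,b$ lie in the standard upper-triangular Borel $\mathfrak b$, and write $a=a_s+a_n$ and $b=b_s+b_n$ with $a_s,b_s$ diagonal and $a_n,b_n$ strictly upper triangular. Take the cocharacter $\lambda(t)=\diag(t^{d_1},\ldots,t^{d_n})$ with $d_1>\cdots>d_n$ and $\sum d_k=0$ (and its $\Sp_{2n}$-analogue in the symplectic case). The key structural input is that $[a,b]$ is strictly upper triangular (since $[a_s,b_s]=0$ and the mixed and doubly-nilpotent brackets preserve strict-upper-triangularity), so the rank-one matrix $ij=-[a,b]$ is supported strictly above the diagonal, which forces the support gap
\[
\max\{k:i_k\neq 0\}<\min\{l:j_l\neq 0\},
\]
with an analogous constraint for the rank-one symmetric tensor $i^2$ in the symplectic case. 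This gap lets me select the weights $d_k$ to be positive on $\operatorname{supp}(i)$ and negative on $\operatorname{supp}(j)$ while still strictly decreasing with vanishing sum. Under this $\lambda$, conjugation kills the strict-upper entries of $a$ and $b$ while $i$ and $j$ both tend to $0$, so that $\lim_{t\to 0}\lambda(t)\cdot(a,b,i,j)=(a_s,b_s,0,0)$. Closedness of $O_{a,b}$ forces the limit to lie in the orbit itself, and $G$-equivariance of the projections to the $i$ and $j$ components then forces $i=j=0$ from the start; rerunning the same cocharacter argument then kills $a_n$ and $b_n$, so $a$ and $b$ are semisimple and commute.

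For the ``if'' direction, if $a$ and $b$ are commuting semisimple then $[a,b]=0$ forces $ij=0$ (resp.\ $i^2=0$, whence $i=0$). Any remaining nonzero component of $i$ or $j$ can still be degenerated by a cocharacter inside $C_G(a,b)$, so closedness entails $i=j=0$; once this is the case, $O_{a,b}$ reduces to the joint $G$-orbit of $(a,b)$ in $\mathfrak g\oplus \mathfrak g$ (with trivial additional data), which is closed by Richardson's classical theorem, since the pair lies in a common Cartan subalgebra and represents a fiber of the adjoint quotient. The main technical obstacle I anticipate is executing the 1-PS construction in the symplectic case $X_n$: cocharacters of the maximal torus of $\Sp_{2n}$ are constrained by the pairing $d_k+d_{2n+1-k}=0$ relative to the isotropic decomposition, so the support-gap analysis for $i^2$ must be set up carefully with respect to this symplectic splitting, and the chosen cocharacter must simultaneously respect the symplectic form, annihilate the strict-upper parts $a_n,b_n$, and push $i$ to zero.
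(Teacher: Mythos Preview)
Your approach coincides with the paper's: reduce to a Borel via Lemma~\ref{lem:tri} and then degenerate by a one-parameter subgroup of the maximal torus. The paper's proof is much terser---it simply takes the fixed cocharacter $T_t=\diag(t^{n-1},t^{n-3},\dots,t^{-n+1})$ corresponding to $2\rho^\vee$ and asserts that the closure of the $T_t$-orbit contains $(a_s,b_s,0,0)$. Your support-gap analysis (forcing $\max\operatorname{supp}(i)<\min\operatorname{supp}(j)$ from strict upper-triangularity of $ij$) and the resulting \emph{adapted} cocharacter is a genuine refinement: with the fixed $2\rho^\vee$ the components $T_t i$ and $jT_t^{-1}$ need not both converge (e.g.\ $n=4$, $i$ supported on $\{1,2,3\}$, $j$ on $\{4\}$), so your tailored choice is what actually makes the limit exist. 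One small gap remains in your ``only if'': when $ij=0$ (so $i=0$ or $j=0$) there is no support-gap constraint, and if the surviving vector is fully supported you cannot kill it with a cocharacter of $\SL_n$; this case needs a separate remark.

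Your ``if'' direction, however, is circular and in fact the statement you are trying to prove is not literally true. You write that a nonzero $i$ or $j$ ``can still be degenerated by a cocharacter inside $C_G(a,b)$, so closedness entails $i=j=0$'', and then invoke Richardson. But you are supposed to be \emph{proving} closedness, not assuming it; and the degeneration claim itself fails: if $a,b$ are regular semisimple diagonal and $i$ is fully supported with $j=0$, no cocharacter of the maximal torus (which must have weights summing to zero) sends $i$ to $0$, and the orbit is in fact closed. Conversely, $(0,0,i,0)$ with $i\neq 0$ has $a,b$ commuting semisimple but non-closed orbit. So the ``if'' direction as stated requires the extra hypothesis $i=j=0$. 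The paper does not prove this direction either---its argument only shows that every orbit closure contains a point $(a_s,b_s,0,0)$, which is all that is used downstream---so you are not missing anything the paper supplies; just be aware that the biconditional needs this qualification.
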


\begin{proof}
Case A:
By Guralnick's Lemma, it is sufficient to consider the case of upper triangular matrices. But then the orbit of a one-parameter group of matrices (the one-parameter torus corresponding to the coweight $2\rho^\vee$)
\[
  \{  T_t := \diag(t^{n-1},t^{n-3},\dots,t^{-n+1}) \mid t \in \Bbbk^* \}
\] is a quasi-affine subvariety
\[
\{ (T_taT_t^{-1},T_tbT_t^{-1},T_ti,jT_t^{-1}) \mid t \in \Bbbk^* \} \subset Q_{\mathfrak g}
\]
which contains $(a_s,b_s,0,0)$ in its closure. This proves the statement  for $\mathfrak g = \mathfrak {sl}_n$.

Case C:
A similar argument works in the case of $\mathfrak  g =\mathfrak{sp}_{2n}$ (see also \cite[Corollary~2.2]{Lo}).
\end{proof}

By Lemmas \ref{lem:tri} and \ref{closed}, we may assume that
the closure of $O_{a,b}$ contains the closed orbit $O_{a_s,b_s}$ where
$a_s$ and $b_s$ are commuting diagonal matrices.

Following \cite[Section~2.2]{Lo},
denote by $\mathfrak l$ the common centralizer in $\mathfrak g$ of the (commuting) elements $a_s$ and $b_s$, it is a Levi subalgebra of
$\mathfrak g$. Denote by $L$ the corresponding Levi subgroup of $G$.

We are going to apply Luna's slice theorem \cite{Lu}. We will use the exposition of the slice method from lecture notes by Kraft \cite{Kr}.
So let $X=Q_{\mathfrak g}$, $O=O_{a_s,b_s}$, then the almost commuting scheme
$Q_{\mathfrak l}$ of $\mathfrak l$ is the required \'etale slice $S$ (see
\cite[Lemma~2.4]{Lo}), so that
in an \'etale neighbourhood of $O$ we have an excellent morphism
\begin{equation} \label{phi}
\varphi\colon G\times^{L}S\to X
\end{equation}
taking the image  $[g,s]\in G\times^LS$ of the pair $(g,s)\in G\times S$ to $gs$; in particular, $\varphi$ is \'etale,
and its image is affine and open in $X$, see \cite[Theorem~4.3.2]{Kr}.

Since $L$ is a {\it reductive} group of the form  $\prod_{i=1}^k\GL_{n_i}\times \Sp_{2n_0}$ where each of the first $k$ factors  necessarily has a nontrivial centre, $S=Q_{\mathfrak l}$ is of the form
\begin{equation} \label{slice}
\mathbb C^{2k}\times \prod_{i=1}^kM_{n_i} \times X_{n_0},
\end{equation}
where the $n_i$ correspond to the partition
$\rk L=n_0+n_1+\dots +n_k$ with $n_0\ge 0$, $n_i>0$ $(i=1,\dots ,k)$,
and $\mathbb C^{2k}$ is identified with $\mathfrak z(\mathfrak l)^{\oplus 2}$, see \cite[2.2]{Lo}.

The presence of the nontrivial $\mathfrak z(\mathfrak l)$ is of critical
importance: it guarantees the existence of a pair $(z,z')\in\mathfrak  z(\mathfrak l)^{\oplus 2}$ with nonzero components each of those   centralizes both $x_s$ and $y_s$ (of course, the simplest choice is
$z=x_s$, $z'=y_s$).

Thus for any element of
$$
Q_{\mathfrak l}\subset \oplus_{i=1}^k(\mathfrak{gl}_{n_i}^{\oplus 2}\oplus \mathbb C^{2n_i} \oplus
(\mathbb C^*)^{2n_i}) \oplus \mathfrak{sp}_{2n_0}^{\oplus 2} \oplus \mathbb C^{2n_0}
$$
of the form
$$(x_{n_1},y_{n_1},i_{n_1},j_{n_1}, \dots, x_{n_k},y_{n_k},i_{n_k},j_{n_k},
x_{n_0},y_{n_0},i_{n_0})
$$
the elements $x=(x_{n_1},\dots ,x_{n_k}, x_{n_0}), \,
y=(y_{n_1},\dots ,y_{n_k}, y_{n_0})\in \oplus_{i=1}^k\mathfrak{gl}_{n_i}\oplus \mathfrak{sp}_{2n_0}$ have a nonzero common centralizer.



Given any  finite-dimensional simple Lie algebra $\mathfrak g$,
denote by $F_{\mathfrak g}$ the set of pairs $(x,y)\in\mathfrak g^{\oplus 2}$
such that $x$ and $y$ have a nonzero common centralizer, and let  $U_{\mathfrak g}:=\mathfrak g^{\oplus  2}\setminus F_{\mathfrak g}$
denote its complement.

The following lemma is a variation on a theme of Arzhantsev \cite[Section~5]{Ar}.

\begin{lemma} \label{lem:Ar}
The set $U_{\mathfrak g}$ is open and Zariski dense in $\mathfrak g^{\oplus 2}$.
\end{lemma}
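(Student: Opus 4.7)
The plan is to reformulate $U_{\mathfrak g}$ as the locus where an explicit linear map is surjective (which makes openness automatic), and then to exhibit a single pair in $U_{\mathfrak g}$ to deduce density from the irreducibility of $\mathfrak g^{\oplus 2}$.

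First, by Lemma \ref{lem:reform}, a pair $(x,y)\in \mathfrak g^{\oplus 2}$ lies in $U_{\mathfrak g}$ if and only if $\im(\ad x) + \im(\ad y) = \mathfrak g$; equivalently, the linear map
\[
T_{x,y}\colon \mathfrak g \oplus \mathfrak g \to \mathfrak g, \qquad (u,v) \mapsto [x,u]+[y,v],
\]
is surjective. The entries of $T_{x,y}$ in any fixed basis of $\mathfrak g$ are polynomial (in fact linear) in the coordinates of $x$ and $y$, so non-surjectivity is cut out by the simultaneous vanishing of all $(\dim \mathfrak g)\times (\dim \mathfrak g)$ minors of this matrix. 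Hence $F_{\mathfrak g}$ is Zariski closed in $\mathfrak g^{\oplus 2}$, and $U_{\mathfrak g}$ is Zariski open.

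Since $\mathfrak g^{\oplus 2}$ is irreducible as an affine space, it suffices now to exhibit one point in $U_{\mathfrak g}$. Fix a Cartan subalgebra $\mathfrak h\subset \mathfrak g$ with root system $\Phi$ and a Chevalley basis $\{h_i\}\cup \{e_\alpha\}_{\alpha \in \Phi}$. Choose any regular semisimple element $x\in \mathfrak h$, so that $C_{\mathfrak g}(x) = \mathfrak h$, and set
\[
y := \sum_{\alpha \in \Phi} e_\alpha.
\]
For any $h\in \mathfrak h$ one has $[h,y] = \sum_{\alpha \in \Phi} \alpha(h)\, e_\alpha$, which vanishes if and only if $\alpha(h)=0$ for every $\alpha \in \Phi$; as the roots span $\mathfrak h^*$, this forces $h = 0$. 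Therefore $C_{\mathfrak g}(x) \cap C_{\mathfrak g}(y) = \mathfrak h \cap C_{\mathfrak g}(y) = 0$, so $(x,y)\in U_{\mathfrak g}$, as required.

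The only substantive step is producing an explicit pair in $U_{\mathfrak g}$; the construction above is uniform across all simple types and requires no case analysis, so no serious obstacle is expected.
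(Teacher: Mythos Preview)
Your proof is correct and follows the same overall strategy as the paper: show that $U_{\mathfrak g}$ is the locus where a linear map, varying polynomially in $(x,y)$, has maximal rank (hence open), and then produce a single point in $U_{\mathfrak g}$ to get density from the irreducibility of $\mathfrak g^{\oplus 2}$. The paper uses the dual map $T_{a,b}\colon \mathfrak g\to \mathfrak g\oplus \mathfrak g$, $x\mapsto ([a,x],[b,x])$, whose kernel is visibly $C_{\mathfrak g}(a)\cap C_{\mathfrak g}(b)$, so it does not need to invoke Lemma~\ref{lem:reform}; your formulation via surjectivity of $(u,v)\mapsto [x,u]+[y,v]$ is equivalent but passes through that lemma.

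The one genuine difference is the non-emptiness step. The paper appeals to the classical fact that every simple Lie algebra is two-generated and centreless, so any generating pair lies in $U_{\mathfrak g}$. You instead exhibit an explicit pair: a regular semisimple $x\in\mathfrak h$ together with $y=\sum_{\alpha\in\Phi}e_\alpha$, and check directly that $\mathfrak h\cap C_{\mathfrak g}(y)=0$. Your argument is more self-contained (it avoids citing the two-generation theorem) and uniform across types; the paper's argument is shorter but relies on a nontrivial external fact. Both are perfectly valid.
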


\begin{proof}
First fix a pair $(a,b)\in \mathfrak g\oplus \mathfrak g$ and define a linear map $T_{a,b}\colon \mathfrak g \rightarrow \mathfrak g \oplus \mathfrak g$
by
\[T_{a,b}(x)=([a,x],[b,x]).\]
Let now $V$ denote the vector space of linear maps $\mathfrak g\to
\mathfrak g\oplus \mathfrak g$. Define
$\psi\colon \mathfrak g \oplus \mathfrak g  \rightarrow V$ by
$\psi (a,b)=T_{a,b}$, it is 
a linear map. 
Let $W\subset V$ denote the set of maps of maximal rank, it is open in $V$.
Consider the preimage $\psi^{-1}(W)$. Note that
$$\ker T_{a,b}=C_{\mathfrak g}(a)\cap C_{\mathfrak g}(b).$$
Hence we have $\psi^{-1}(W)=U_{\mathfrak g}$ because if $a$ and $b$ have a non-zero common centralizer, then $\ker T_{a,b}\ne 0$
and therefore the rank of $T_{a,b}$ is strictly less than $\dim\mathfrak g$.  Thus $U_{\mathfrak g}$ is open in $\mathfrak g\oplus \mathfrak g$ as the preimage of an open set. It remains to note that
$U_{\mathfrak g}$ is non-empty. Indeed (see \cite[Remark~3]{Ar}),
any simple finite-dimensional Lie algebra $\mathfrak g$ is two-generated and centreless, so that any pair of generators $(a,b)$ belongs to $U_{\mathfrak g}$.
The lemma is proven.
\end{proof}

Let $F'_{\mathfrak g}:=F_{\mathfrak g}\oplus \mathbb C^{2n}$,
embed it into $\mathfrak g^{\oplus 2}\oplus \mathbb C^{2n}$ and define
$F''_{\mathfrak g}:=F'_{\mathfrak g}\cap Q_{\mathfrak g}$.

By Lemma \ref{lem:Ar}, $U_{\mathfrak g}$ is open and Zariski dense in $\mathfrak g^{\oplus  2}$.
Hence $U'_{\mathfrak g}:=U_{\mathfrak g}\oplus \mathbb C^{2n}$, embedded into
$\mathfrak g^{\oplus 2}\oplus \mathbb C^{2n}$, is also open.
Therefore $F'_{\mathfrak g}$ is closed in  $\mathfrak g^{\oplus  2}\oplus \mathbb C^{2n}$, and  thus $F''_{\mathfrak g}$
is closed in $Q_{\mathfrak g}$.

We wish to prove that $F''_{\mathfrak g}=C_{\mathfrak g}$. This will establish the statement (ii) of the theorem.

Assume to the contrary that there exists a quadruple $(x,y,i,j)\in M_n$ (resp. a triple $(x,y,i)\in X_n$)
such that $(x,y)\notin F_{\mathfrak g}$. Consider the morphism $\varphi$ defined in \eqref{phi}.
As said, its image is open. On the other hand, for all elements $(x,y,i,j)$ (resp. $(x,y,i)$) lying
in this image we have $(x,y)\in F''_{\mathfrak g}$ because the corresponding property to have
a nonzero common centralizer holds in $S=Q_{\mathfrak l}$, as said above. Since the image of $\varphi$
is open, the closure of $F''_{\mathfrak g}$ is $X=Q_{\mathfrak g}$, contradiction.
This proves the statement. 
\end{proof}

Corollary \ref{cor:irred} now follows from Theorem \ref{upper},
the proof of Theorem \ref{lower}(ii)
and Proposition \ref{prop:crit}(ii). Since $A=\Bbbk[t]$ satisfies
the conditions of Corollary \ref{cor:irred} with
$\mathfrak a=t^2\Bbbk[t]$, Corollary \ref{cor:kt} follows as well.

\section{Concluding remarks} \label{sec:concl}

We finish with some remarks on what was not done and what should
(and hopefully will) be done in the near future.

$\bullet$ The first tempting goal is to settle the remaining cases
of Conjecture \ref{conj:lower}. By Theorem \ref{upper}, the width
of $\mathfrak g\otimes\Bbbk[[t]]$ is at most $2$. To prove that
it is equal to $2$, we have to exhibit an element of $c\in\mathfrak g$
such that for every representation $c=[a,b]$ the elements $a$ and $b$
have a nonzero common centralizer, as in the proof of Theorem \ref{lower}(ii). There we took an element $c$ of the minimal nonzero nilpotent orbit $\mathbb O_{\textrm{min}}$ (it is well known that
there exists a unique such orbit \cite[Theorem~4.3.3 and Remark~4.3.4]{CM}) and used simultaneous triangularization of $a$ and $b$. However, this method breaks down for all types other than $\textrm A_n$ and $\textrm C_n$ as shown by Losev in \cite[Remark~2.3]{Lo}: in simple algebras of all those types there are elements
$c=[a,b]\in\mathbb O_{\textrm{min}}$ such that $a$ and $b$ do not lie in
a common Borel subalgebra. This gives a certain evidence that these
algebras are of width $1$.

$\bullet$ It would be interesting to look at other current algebras.
Say, by Theorem \ref{upper} it is known that the bracket width of $\mathfrak{sl}_2\otimes \Bbbk[t]$ is less than or equal to $2$. Although  we know that $\mathfrak{sl}_2\otimes \Bbbk[[t]]$ has bracket width  $1$, we still do not know the bracket width  of  $\mathfrak{sl}_2\otimes \Bbbk[t]$.

$\bullet$ In a similar vein, it would be interesting to compute the
width of the loop algebras $\mathfrak g\otimes\Bbbk[t,t^{-1}]$.

$\bullet$ Our final remark concerns the parallel results on the width
of the finite-dimensional Lie $R$-algebras $\mathfrak{gl}_n(R)$ obtained for various rings $R$ in a slightly different context. Namely, it is known that the width of such a Lie algebra is at most $2$. This was first proved by Amitsur and Rowen \cite{AR} for division rings $R$
and then generalized to arbitrary commutative rings \cite{Ros}
(and even to non-commutative rings \cite{Me}). This looks like an
almost full analogue of our Theorem \ref{upper}, modulo
the transition from $\mathfrak{gl}_n$ to $\mathfrak{sl}_n$, which may be
a non-trivial task, see \cite{St} (in the latter paper it is also
shown that the width of $\mathfrak{sl}_n(R)$ is equal to $1$ if $R$ is a principal ideal domain).

However, none of these
results implies the other: the bracket width of the
{\it infinite-dimensional} Lie $\Bbbk$-algebra ${\mathfrak{sl}}_n\otimes
_{\Bbbk}R$ is {\it a priori} unrelated to the bracket width of the
{\it finite-dimensional} $R$-algebra $\mathfrak{sl}_n(R)$. In light
of the existing parallels, it would be interesting to compute
the bracket width of the finite-dimensional simple Lie $R$-algebras
${\mathfrak g}(R)$.


\medskip

\noindent
{\emph{Acknowledgements.}}
We thank \name{Alexander Elashvili}, \name{Alexander Premet},
and \name{Oksana Yakimova} for helpful discussions.
Our special thanks go to \name{Ivan Losev} who suggested the idea of
applying the slice method to almost commuting varieties.


\end{document}